\documentclass[11pt]{amsart} 
\usepackage{amsmath, amssymb, amsthm} 
\usepackage{mathtools}
\usepackage{dutchcal}

\newtheorem{theorem}{Theorem} 

\newtheorem{corollary}{Corollary} 
 
\theoremstyle{definition} 
\newtheorem{definition}{Definition} 
\theoremstyle{remark} 

\begin{document} 

\title{Tail Bounds via Southwest Boundary} 
\author{Stephen Jordan Harrison} 
\date{\today}

\maketitle

\begin{abstract}
We derive upper bounds for probabilities of the form \\ \(P(g(\mathbf{X})\ge t)\) using the southwest boundary (recently introduced in our previous work)
\(\partial_{\mathrm{SW}} Q(g^{-1}[t,\infty))\), where \(Q\) is a reflection to the first quadrant. Under natural continuity, symmetry, and monotonicity assumptions on 
\(g\), this yields explicit and computable bounds of the form \(P(g(\mathbf{X})\ge t)\le ns_t\), where \(s_t\) is the unique parameter at which the line 
\(L(s)=(f_1^{-1}(s),\dots,f_n^{-1}(s))\) intersects the southwest boundary. In particular, when \(g\) is a homogeneous polynomial of degree \(k\) (plus a constant $C$) and 
all tail bounds on the random variables are identical, the bound proves to the closed-form expression
\[
P(g(\mathbf{X})\ge t)\leq nf\bigg(\frac{(t-C)^{1/k}}{(\sum_i|a_i|)^{1/k}}\bigg)
\]
where $a_i$ are the coefficients of the monomials in $g$. We then obtain an explicit tail bound for the trace of a Schur multiplier acting on random matrices with 
identical tail bounds on the random variables. No assumptions are made about independence or dependence.
\end{abstract}

\section{Introduction}
If $X=(X_1,...,X_n)$ is random vector in $\mathbb{R}^n$ with the $X_i$ possibly dependent such that 
$$
P(|X_i|\geq t) \leq f_i(t), \ \forall t \geq 0
$$
for all $i$, write $X \in \mathcal{T}^d(f)$ where $f=(f_1,...,f_n).$\footnote{In \cite{harrison2026} $\mathcal{T}^d(f)$ was instead a set (class) of measures on 
$\mathbb{R}^n$ satisfying certain tail bound conditions; it is easy to convert from measures to r.v.'s and vice versa.} 

For $x=(x_1,...,x_n), y=(y_1,...,y_n) \in [0,\infty)^n$ define $x\leq' y \iff x_i \leq y_i$ for all $i$. If $\emptyset \neq V \subset [0,\infty)^n$ is closed define 
the \textit{southwest boundary} of $V$ as 
\begin{equation}\label{SWdefinition}
\partial_{SW}V = \{x \in V: \nexists x' \in V, x' <' x\}.
\end{equation}
That is, $\partial_{SW}V$ is the set of minimal elements of $V$. Let 
\begin{align*}
\mathcal{B} &= \big\{f:[0,\infty) \rightarrow [0,1]: f(0) = 1, f \downarrow 0,  f \text{ is left continuous} \big\} \\
\mathcal{B}_c &= \big\{f: [0,\infty) \rightarrow (0,\infty): f \text{ is continuous}, \text{ strictly decreasing}, f(0) \geq 1, \\ 
                                                                                & \ \ \ \ \ \ \ \ \ \ \ \ \ \ \ \ \ \ \ \ \ \ \ \ \ \ \ \ \ \ \ \ \ \ \ \ \ \ \ \ \ \ \ \ \ \ \ \ \ \ \ \ \ \ \ \ \ \ \ \ \ \ \ \ \ \ \ \
                                                                                 \text{ and} \lim_{t\rightarrow \infty} f(t) = 0\big\}.
\end{align*}
In \cite{harrison2026} we proved if $\emptyset \neq V \subset \mathbb{R}^n$ is closed and $f_i \in \mathcal{B}$ for all $i$ then 
\begin{equation}\label{mainSWresult}
\sup_{X \in \mathcal{T}^d(f)} P(X \in V) = \sup_{X \in \mathcal{T}^d(f)} P\big(X \in \partial_{SW}Q(V)\big)
\end{equation}
where $Q(x_1,...,x_n) = (|x_1|,...,|x_n|), \forall x_i \in \mathbb{R}$. This was done by reflecting $X$ ($X$'s pushforward measure on $\mathbb{R}^n$) into $[0,\infty)^n$ via $Q$ and then retracting $X's$ mass ($X$'s pushforward measure in $\mathbb{R}^n$) 
in $Q(V)$ to $\partial_{SW}Q(V)$ while respecting the tail bounds. That is, for $X \in \mathcal{T}^d(f)$ there exists $Y_X \in \mathcal{T}^d(f)$ such that
\begin{equation}\label{secondSWresult}
P(X \in V) = P\big(Y_X \in \partial_{SW}Q(V)\big).
\end{equation}
\eqref{mainSWresult} is relevant because if $g:\mathbb{R}^n \rightarrow \mathbb{R}$ is continuous then
$$
\sup_{X \in \mathcal{T}^d(f)} P(g(X) \geq t) = \sup_{X \in \mathcal{T}^d(f)} P\big(X \in g^{-1}[t,\infty)\big)
$$
and $g^{-1}[t,\infty)$ is closed. So a bound may potentially be placed on $P(g(X) \geq t)$ by analyzing $\partial_{SW} Q\big(g^{-1}[t,\infty)\big)$, and 
$\partial_{SW} Q\big(g^{-1}[t,\infty)\big)$ is detailed enough to place the best possible bound on $P(g(X) \geq t)$ if analyzed exactly. 
Analyzing $\partial_{SW} Q\big(g^{-1}[t,\infty)\big)$ (non-exactly) will be our goal in this article. 

For the following results in sections \ref{SectionGeneralBounds}-\ref{SectionSchurmultiplier}, no assumptions are made about the independence or dependence of the r.v.'s.
\section{General Bounds}\label{SectionGeneralBounds}
Note \eqref{mainSWresult} and \eqref{secondSWresult} also holds if $f=(f_1,...,f_n) \in \mathcal{B}_c^n$ since for all $i$, $f_i':= \min\{f_i,1\} \in \mathcal{B}$ and if 
$f' = (f_1',...,f_n')$ then $X \in \mathcal{T}^d(f) \iff X \in \mathcal{T}^d(f')$.  

For the remainder of sections \ref{SectionGeneralBounds} and \ref{SectionHomogeneousPolynomial} let $f = (f_1,...,f_n) \in \mathcal{B}_c^n$. Consider the line 
\begin{equation}\label{Leq}
L(s) = (f_1^{-1}(s),...,f_n^{-1}(s)) \subset [0,\infty)^n
\end{equation}
for $s \in (0,1]$ which is well defined since $f_i(0) \geq 1$ for all $i$, and $f_i$ are continuous and strictly decreasing to $0$.

\begin{theorem}\label{theorem1} 
Let $X \in \mathcal{T}^d(f)$ and $g:\mathbb{R}^n \rightarrow \mathbb{R}$ be continuous. Suppose there exists $s_t > 0$ such that
$$
L(s_t) \in \partial_{SW} Q\big(g^{-1}[t,\infty)\big).
$$
Then
$$
P(g(X) \ge t) \le n s_t.
$$
\end{theorem}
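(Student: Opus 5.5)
The plan is a direct union-bound argument. The only input is the geometric fact that a minimal element of $Q\big(g^{-1}[t,\infty)\big)$ cannot strictly dominate, coordinatewise, any other point of that set. The retraction result \eqref{mainSWresult} is not needed.

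Put $V = g^{-1}[t,\infty)$; it is closed because $g$ is continuous. Write $L(s_t) = (\ell_1,\dots,\ell_n)$ with $\ell_i = f_i^{-1}(s_t)$. Since $L$ is only defined on $(0,1]$, the hypothesis forces $s_t \le 1$. In any case, if $s_t \ge 1/n$ the bound $n s_t \ge 1$ is trivial.

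The key step is the inclusion
\[
\{X \in V\} \subseteq \bigcup_{i=1}^n \{|X_i| \ge \ell_i\}.
\]
To prove it, take any $x \in V$ and put $y = Q(x) \in Q(V)$. Suppose that $|x_i| < \ell_i$ for every $i$. Then $y \le' L(s_t)$ and $y \neq L(s_t)$; in fact $y$ is strictly smaller in every coordinate. So $y$ is a point of $Q(V)$ lying strictly southwest of $L(s_t)$. By \eqref{SWdefinition}, this contradicts $L(s_t) \in \partial_{SW} Q(V)$.

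This argument works whichever reading of $<'$ is intended: either "$\le'$ and not equal", or "strictly less in every coordinate". The point $y$ satisfies both conditions, so in either case its existence contradicts minimality. Hence at least one coordinate satisfies $|x_i| \ge \ell_i$, which gives the inclusion.

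The conclusion then follows from the union bound and the tail hypothesis $X \in \mathcal{T}^d(f)$:
\[
P(g(X)\ge t) = P(X \in V) \le \sum_{i=1}^n P(|X_i| \ge f_i^{-1}(s_t)) \le \sum_{i=1}^n f_i\big(f_i^{-1}(s_t)\big) = n s_t .
\]
The last equality uses that each $f_i \in \mathcal{B}_c$ is a continuous strictly decreasing bijection from $[0,\infty)$ onto $(0, f_i(0)]$, and $(0, f_i(0)] \supseteq (0,1]$. Hence $f_i(f_i^{-1}(s)) = s$ for all $s \in (0,1]$.

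The only step needing care is the minimality step: checking that the intended definition of $<'$ in \eqref{SWdefinition} indeed rules out points of $Q(V)$ that are strictly smaller in all coordinates. As noted above, this holds under both natural readings. Everything else is routine, and no independence assumption enters anywhere.
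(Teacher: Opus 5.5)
Your proof is correct, and it is more elementary than the paper's. The key inclusion and the final union bound are the same in both.

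The paper proves the inclusion only for the southwest boundary, $\partial_{SW}Q(V)\subseteq\bigcup_k R_k$ with $R_k=\{x\in[0,\infty)^n:\pi_k(x)\ge \ell_k\}$. It then invokes the retraction result \eqref{secondSWresult} from the earlier work to replace $X$ by some $Y_X\in\mathcal{T}^d(f)$ with $P(X\in V)=P(Y_X\in\partial_{SW}Q(V))$, and applies the union bound to $Y_X$.

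You observe that minimality of $L(s_t)$ keeps \emph{all} of $Q(V)$, not just its southwest boundary, out of the box $\prod_i[0,\ell_i)$. In fact the paper's own inclusion argument only uses $x\in V_t$, never that $x$ lies on the boundary. So you can apply the union bound directly to $Q(X)$, whose coordinates have the same tails as those of $X$, and the retraction step becomes superfluous.

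What your route gains:
\begin{itemize}
\item It is self-contained and does not rely on the theorem from the earlier paper.
\item It needs neither $V$ closed nor $g$ continuous.
\item It never uses $L(s_t)\in Q(V)$, only that no point of $Q(V)$ is strictly southwest of $L(s_t)$, so it proves a slightly more general statement.
\item It makes transparent that the theorem is a union bound over coordinate half-spaces, which the paper itself concedes in its final section.
\end{itemize}

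The paper's route gives nothing extra for this particular bound. Its only merit is placing the result inside the framework of \eqref{mainSWresult}, where an exact analysis of $\partial_{SW}Q(V)$ would yield the optimal bound.
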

\begin{proof} Let $x^* = L(s_t)$.

For each $k = 1, \dots, n$, define the region
$$
R_k := \{ x \in [0,\infty)^n : \pi_k(x) \ge \pi_k(x^*) \}.
$$
We claim
\begin{equation}\label{V_tsubset}
\partial_{SW}V_t \subseteq \bigcup_{k=1}^n R_k.
\end{equation}
Indeed, $x^* \in \text{RHS}$. And if $x^*\neq x \in \partial_{SW}V_t$ then some component of $x$ must be greater than the corresponding component of $x^*$,
otherwise $x^*$ would not be minimal in $V$. In particular $x \in \text{RHS}$ of $\eqref{V_tsubset}$. 

Now, let $Y_X$ be given by \eqref{secondSWresult}. Then 
\begin{align*}
P\big(X \in g^{-1}[t,\infty)\big) &= P\big(Y_X \in \partial_{SW}Q(g^{-1}[t,\infty)\big) \\
                                  &\leq P\Big(Y_X \in \bigcup_{k=1}^n R_k\Big)\\
                                  &\leq\sum_{k=1}^n P(Y_X \in R_k) \\ 
                                  &=\sum_{k=1}^n P(\pi_k(Y_X) \geq \pi_k(x^*)) \\ 
                                  &\leq \sum_{k=1}^n f_k(\pi_k(x^*)) \ \ \ \ \ \ \ (\text{since } Y_X \in \mathcal{T}^d(f)) \\ 
                                  &=n s_t
\end{align*}
since $\pi_k(x^*) = f_k^{-1}(s_t)$ for all $k$.
\end{proof}

\begin{theorem}\label{theorem2}
Let $X \in \mathcal{T}^d(f)$ and $h(s) = g\big(f_1^{-1}(s), \dots, f_n^{-1}(s)\big), \ \forall s\in(0,1]$. Suppose $\lim_{s\downarrow 0} h(s) = \infty$, $g:\mathbb{R}^n \rightarrow \mathbb{R}$ is continuous, even in 
each coordinate, and if any $n-1$ coordinates are held fixed, $g$ is strictly increasing in the remaining coordinate on $[0,\infty)$. Then 
$\forall t \geq h(1)$,
\begin{equation}\label{theorem2eq1}
P(g(X) \ge t) \leq n h^{-1}(t).
\end{equation}
Furthermore, if $f_1(0)=\cdots =f_n(0)$ then \eqref{theorem2eq1} holds $\forall t \geq g(\mathbf{0})$.
\end{theorem}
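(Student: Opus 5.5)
The plan is to reduce Theorem \ref{theorem2} to Theorem \ref{theorem1}. The natural candidate is $s_t := h^{-1}(t)$, so the work is to show that $h^{-1}$ is well defined and that $L(s_t)$ lies on the southwest boundary.

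\emph{Step 1: $h$ is invertible.} Each $f_i^{-1}$ is continuous and strictly decreasing on $(0,1]$, because $f_i\in\mathcal{B}_c$. Hence $s\mapsto L(s)$ is continuous and strictly decreasing in every coordinate. Take $s<s'$. Then $L(s')$ is obtained from $L(s)$ by decreasing the coordinates one at a time. By the coordinatewise strict monotonicity of $g$ on $[0,\infty)$, each such step strictly lowers $g$. So $h$ is continuous and strictly decreasing on $(0,1]$. Together with $\lim_{s\downarrow0}h(s)=\infty$, the intermediate value theorem shows that $h$ maps $(0,1]$ bijectively onto $[h(1),\infty)$. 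Thus for each $t\ge h(1)$ there is a unique $s_t=h^{-1}(t)\in(0,1]$ with $g(L(s_t))=t$.

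\emph{Step 2: $L(s_t)$ is on the southwest boundary.} Since $g$ is even in each coordinate, $g(x)=g(Q(x))$. Hence
$$Q\big(g^{-1}[t,\infty)\big)=\{x\in[0,\infty)^n: g(x)\ge t\}.$$
This set is closed and contains $x^*:=L(s_t)$. For minimality, I read $x'<'x^*$ as $x'\le' x^*$ with $x'\neq x^*$, which is the sense needed for minimal elements. Then $x'$ is reached from $x^*$ by lowering some coordinates, at least one strictly. The same chain-of-coordinates argument as in Step 1 gives $g(x')<g(x^*)=t$, so $x'$ is not in the set. Hence $x^*\in\partial_{SW}Q(g^{-1}[t,\infty))$, and Theorem \ref{theorem1} yields $P(g(X)\ge t)\le n s_t=nh^{-1}(t)$.

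\emph{Step 3: the case $f_1(0)=\cdots=f_n(0)=:c\ge1$.} Now $L$, and hence $h$, extend to $(0,c]$ with $L(c)=\mathbf 0$ and $h(c)=g(\mathbf 0)$. Steps 1 and 2 go through verbatim on $(0,c]$, since Theorem \ref{theorem1} only needs $s_t>0$. So $h^{-1}$ is defined on $[g(\mathbf 0),\infty)$ and the same bound holds there. Alternatively, for $g(\mathbf 0)\le t<h(1)$ one has $h^{-1}(t)\ge1$, so the bound $n h^{-1}(t)\ge 1$ is trivial.

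The main obstacle I expect is this last case, specifically making precise which extension of $h^{-1}$ the statement intends. The equal values $f_i(0)$ are exactly what is needed for all coordinates of $L$ to reach $0$ at the same parameter $s=c$. I would first try the extension argument, which makes the inverse well defined on all of $[g(\mathbf 0),\infty)$, and then note the triviality remark as a fallback.
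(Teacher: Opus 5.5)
Your proposal is correct and follows essentially the same route as the paper. You reduce to Theorem~\ref{theorem1} with $s_t=h^{-1}(t)$, use evenness to get $Q(g^{-1}[t,\infty))=\{x\in[0,\infty)^n: g(x)\ge t\}$, use coordinatewise strict monotonicity to show $L(s_t)$ is minimal, and treat the equal-$f_i(0)$ case both by extending $h$ to $(0,c]$ and by noting that $h^{-1}(t)\ge 1$ makes the bound trivial on $[g(\mathbf 0),h(1))$. The differences are minor: you prove only the inclusion $L(s_t)\in\partial_{SW}Q(g^{-1}[t,\infty))$ that Theorem~\ref{theorem1} needs (the paper proves the full equality $\partial_{SW}V_t=g^{-1}(t)\cap[0,\infty)^n$), and your chain-of-coordinates argument makes explicit the role of $g$'s monotonicity in $h$ being strictly decreasing, which the paper only asserts.
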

\begin{proof} 
Since $h$ is continuous, $h(1) \leq t$, and $\lim_{s\downarrow 0} h(s) = \infty$, by the intermediate value theorem, $h(s)=t$ has a solution. 
Clearly $h$ is strictly decreasing (since $f_i^{-1}$ are strictly decreasing), so this solution is unique. Now let $V_t = Q\big(g^{-1}[t,\infty)\big)$. Note by $g$'s evenness property that 
$$
V_t = \{x \in [0,\infty)^n: g(x) \geq t\}.
$$
We claim 
\begin{equation}\label{theorem2eq2}
\partial_{SW}V_t = g^{-1}(t)\cap [0,\infty)^n
\end{equation}
from which \eqref{theorem2eq1} immediately follows from theorem~\ref{theorem1}, since $h(s)=t$ implies $L(s) \in g^{-1}(t)$ \eqref{Leq} which implies 
$L(s) \in g^{-1}(t)\cap [0,\infty)^n$ since $L(s):(0,1] \rightarrow [0,\infty)^n$. Firstly by $g$'s properties $g(\mathbf{0}) \leq g(L(s)) = t$, so assuming  
$(x_1,...,x_n) \in V_t\backslash g^{-1}(t)$ it follows $(x_1,...,x_n) \subset [0,\infty)^n \backslash \{\mathbf{0}\}$. Then some $x_i$ is greater than $0$, say $x_1$, 
and furthermore by $g$'s properties $\exists \epsilon>0$ with $t<g(x_1-\epsilon,x_2,...,x_n)<g(x_1,...,x_n)$. In particular 
$V_t \ni (x_1-\epsilon,x_2,...,x_n) \leq' (x_1,...,x_n)$ which implies $(x_1,...,x_n) \notin \partial_{SW}V_t$ by \eqref{SWdefinition}. So 
$$
\partial_{SW}V_t \subset g^{-1}(t).
$$
Since $\partial_{SW}V_t \subset [0,\infty)^n$ this shows $\partial_{SW}V_t \subset g^{-1}(t)\cap [0,\infty)^n$. 
Conversely if $x \in g^{-1}(t)\cap [0,\infty)^n$ and $\mathbf{0} \leq' x' <' x$ then by $g$'s properties $g(x')<g(x)=t$ so $x' \notin V_t$. 
So $g^{-1}(t)\cap [0,\infty)^n \subset \partial_{SW}V_t$. 
This shows \eqref{theorem2eq2}. Hence \eqref{theorem2eq1} holds. 

Now, if $f_1(0)=\cdots = f_n(0) = c$, note $h(s)$ is definable on the larger domain $[c,0)$ and $h$ will still be continuous and strictly decreasing. 
Then $\forall t \geq h(c) = g(\mathbf{0})$, since $\lim_{s\downarrow 0} h(s) = \infty$, by the intermediate value theorem $h(s) = t$ has a unique solution. 
On the other hand, since $h\downarrow$, $h^{-1} \downarrow$ too. Then if $g(\mathbf{0}) \leq t < h(1)$,
$$
h^{-1}(t) \geq h^{-1}(h(1)) = 1
$$
so \eqref{theorem2eq1} also holds for $t \in [g(0),h(1))$. 
\end{proof}
\section{Homogeneous Polynomial Case}\label{SectionHomogeneousPolynomial}
One case is of particular interest. 
\begin{corollary}\label{homogeneouscorollary} Suppose $X \in \mathcal{T}^d(f)$ and $g$ is a homogeneous polynomial of degree $k$ plus a constant $C$. 
Then $\forall t \geq C$
\begin{equation}\label{homogeneouseq}
P(g(X) \geq t) \leq n f\left(\frac{(t-C)^{1/k}}{(\sum_i |a_i|)^{1/k}} \right)
\end{equation}
where $a_i$ are the coefficients of $g$. 
\end{corollary}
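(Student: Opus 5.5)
Here $f_1=\cdots=f_n=f$, so that $f$ makes sense in \eqref{homogeneouseq}. Write $g(x)=C+\sum_i a_i m_i(x)$, where the $m_i$ are monic monomials of degree $k$. A general homogeneous $g$ is neither even nor coordinatewise increasing, so Theorem~\ref{theorem2} cannot be applied to $g$ directly. Instead I will dominate $g$ by a symmetric comparison function and apply Theorem~\ref{theorem2} to that. Since $|m_i(x)|\le \|x\|_\infty^k$, we have
$$
g(x)\le \tilde g(x):=C+\Big(\sum_i|a_i|\Big)\sum_{j=1}^n |x_j|^k ,
$$
and in fact $g(x)\le C+(\sum_i|a_i|)\max_j|x_j|^k$. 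Hence $\{g(X)\ge t\}\subseteq\{\tilde g(X)\ge t\}$, and it suffices to bound the latter probability.

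I will use the cleaner comparison $G(x)=C+A\max_j|x_j|^k$ with $A=\sum_i|a_i|$. The function $G$ is continuous and even in each coordinate, but it is only weakly increasing in each coordinate, so Theorem~\ref{theorem2} does not literally apply. Theorem~\ref{theorem1} needs no monotonicity, so I will use it directly. Let $V_t=Q(G^{-1}[t,\infty))=\{x\ge' 0:\max_j x_j\ge r\}$ with $r=((t-C)/A)^{1/k}$, and note that $L(s)=(f^{-1}(s),\dots,f^{-1}(s))$ lies on the diagonal. Choose $s_t=f(r)$, so that $L(s_t)=(r,\dots,r)$; this is well defined when $f(r)\le 1$. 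I then have to check that $(r,\dots,r)\in\partial_{SW}V_t$. That point lies in $V_t$, and since $<'$ in \eqref{SWdefinition} means coordinatewise $\le'$ with inequality, any $x'<'(r,\dots,r)$ has some coordinate strictly below $r$. That alone does not show $\max_j x'_j<r$, so the check fails: $(r,0,\dots,0)<'(r,\dots,r)$ lies in $V_t$. The max comparison therefore puts the diagonal point off the southwest boundary.

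So I will instead bound the event directly, in the spirit of the proof of Theorem~\ref{theorem1}. The event $\{G(X)\ge t\}$ equals $\bigcup_j\{|X_j|\ge r\}$, so
$$
P(g(X)\ge t)\le \sum_j P(|X_j|\ge r)\le n f(r),
$$
which is exactly \eqref{homogeneouseq}. This union bound is really the content of Theorem~\ref{theorem1}, since the $R_k$ there are the sets $\{x_k\ge r\}$. To stay faithful to the paper's framework, I would present it as an application of Theorem~\ref{theorem1}'s covering argument with $x^*=(r,\dots,r)$: every point of $Q(g^{-1}[t,\infty))$ has some coordinate at least $r$, because otherwise $g(x)<C+Ar^k=t$.

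The main obstacle is justifying the bound $|g(x)-C|\le A\max_j|x_j|^k$ and handling edge cases. When $t=C$ we have $r=0$ and the bound reads $nf(0)\ge 1$, which is trivial. The case $A=0$ is also trivial, since then $g\equiv C$; I would exclude it or treat it separately. The bound fails when $k=0$, but a homogeneous polynomial of degree $0$ is a constant, so that case can be absorbed into $C$.
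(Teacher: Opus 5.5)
Your proof is correct, but it takes a more elementary route than the paper. The paper dominates $g$ by $g'(x)=C+\sum_i|a_i|\,|m_i(x)|$ and asserts that $g'$ satisfies the hypotheses of Theorem~\ref{theorem2}. It then obtains $s(t)=f(r)$ by solving $h(s)=C+A\,(f^{-1}(s))^k=t$, i.e.\ by locating where the diagonal $L(s)$ meets the level set $\{g'=t\}$, which Theorem~\ref{theorem2} identifies with the southwest boundary. The bound then comes from Theorem~\ref{theorem1}, which relies on the retraction \eqref{secondSWresult} from the earlier paper.

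You instead use $g(x)\le C+A\|x\|_\infty^k$ to cover the entire superlevel set, not just its southwest boundary, by the $n$ slabs $\{|x_j|\ge r\}$, and then apply a plain union bound. This uses none of the hypotheses of Theorem~\ref{theorem2} and no retraction, and it treats all $t\ge C$ uniformly. It also avoids two soft spots in the paper's argument:
\begin{itemize}
\item $g'$ is generally not strictly increasing in each coordinate, so Theorem~\ref{theorem2} does not literally apply. For example, $|x_1x_2|$ is constant in $x_1$ on $\{x_2=0\}$, and $g'$ does not depend at all on a variable absent from $g$.
\item The intermediate step $P(|g(X)|\ge t)\le P(g'(X)\ge t)$ can fail when $C<0$. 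The pointwise inequality $g\le g'$ gives what is actually needed.
\end{itemize}

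The paper's route buys only the framing: it exhibits $s_t$ as the parameter where $L$ meets $\partial_{SW}Q(g^{-1}[t,\infty))$. Yours shows that the factor $n$ (rather than $\sum_i k_i$) comes simply from taking the union over coordinates instead of over monomial factors.

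In a write-up you can drop the detour through $G$. The failure of $(r,\dots,r)\in\partial_{SW}V_t$ there is immaterial. Covering $V_t$ itself by the sets $R_k$ only requires that no point of $V_t$ lie strictly below $x^*$ in every coordinate, and your last paragraph verifies exactly this for $Q(g^{-1}[t,\infty))$.
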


\begin{proof} 
Let $g(x) = g_1(x) + \cdots + g_m(x)+C$ where $g_i$ are monomials and $g'(x) = |g_1(x)|+\cdots+|g_m(x)|+C$. Then 
$$
P\big(g(X) \geq t\big) \leq P\big(|g(X)| \geq t\big) \leq P\big(g'(X) \geq t\big).
$$
$g'$ satisfies the properties of theorem~\ref{theorem2}. Applying it
\begin{equation}\label{homogeneouseq2}
P(g'(X)\geq t) \leq ns(t), \ \forall t \geq 0.
\end{equation}
where $s(t)$ is the unique solution to $h(s) := g'(f^{-1}(s),...,f^{-1}(s)) = t$. Solving for $s$ yields $h^{-1}(t) = s(t) =  f\bigg(\frac{(t-C)^{1/k}}{\left(\sum_i |a_i|\right)^{1/k}} \bigg)$.
So \eqref{homogeneouseq} holds. 
\end{proof}

\section{Application to Schur multipliers}\label{SectionSchurmultiplier}
\cite{Skripka2024} includes a definition of Schur multipliers, which we give here, and have already been studied at least once, in \cite{Skripka2024}.
\begin{definition}[Schur multiplier]
Let \(\mathcal{M}_n\) denote the space of \(n \times n\) real matrices and \(E_{i,j}\) the elementary matrix whose only non-zero entry is in position \((i,j)\). 
Let \(d,n \in \mathbb{N}\) and let
\[
\mathfrak{m}(d) = \{m_{j_1,\dots,j_{d+1}}\}_{j_1,\dots,j_{d+1}=1}^n
\]
be a multi-dimensional matrix with real entries. The \(d\)-linear Schur multiplier \(\mathfrak{M}_{\mathfrak{m}(d)} : \mathcal{M}_n \times \cdots \times \mathcal{M}_n 
\to \mathcal{M}_n\) is
\begin{equation}\label{Schurmultipliereq0}
\mathfrak{M}_{\mathfrak{m}(d)}(X_1,\dots,X_d) = \sum_{j_1,\dots,j_{d+1}=1}^n m_{j_1,\dots,j_{d+1}} x^{(1)}_{j_1,j_2} \cdots x^{(d)}_{j_d,j_{d+1}} E_{j_1,j_{d+1}}.
\end{equation}
where $x_{ij}^{(k)}$ is the $(i,j)$ entry of $X_k$.
\end{definition}
\begin{theorem}
Let $\mathfrak{M}_{m(d)}$ be a Schur multiplier and $X_1,...,X_d$ be random matrices. Write $X_{ij}^{(k)}$ for the $(i,j)$ entry of $X_k$. 
Let $f\in \mathcal{B}_c$. If 
$$
P\big(|X_{ij}^{(d)}| \geq t \big) \leq f(t), \ \forall t \geq 0, \ \forall i,j,k
$$
then $\forall t \geq 0$,
$$
P\bigg(\Big|\operatorname{Tr}(\mathfrak{M}_{\mathfrak{m}(d)}\big(X_1,\dots,X_d)\big)\Big| \ge t\bigg)\leq dn^2 f\Bigg(\frac{t^{1/d}}{||\mathfrak{m}(d)||_1^{1/d}}\Bigg).
$$
\end{theorem}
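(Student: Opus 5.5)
The plan is to write the trace as a homogeneous polynomial of degree $d$ in the entries of the random matrices and then apply Corollary~\ref{homogeneouscorollary} with $C=0$. Taking the trace of \eqref{Schurmultipliereq0} keeps only the diagonal terms $j_{d+1}=j_1$. This gives
$$
\operatorname{Tr}\big(\mathfrak{M}_{\mathfrak{m}(d)}(X_1,\dots,X_d)\big)=\sum_{j_1,\dots,j_d=1}^n m_{j_1,\dots,j_d,j_1}\, X^{(1)}_{j_1,j_2}X^{(2)}_{j_2,j_3}\cdots X^{(d)}_{j_d,j_1}.
$$
This is a polynomial $g$ in the $dn^2$ variables $X^{(k)}_{ij}$ ($1\le k\le d$, $1\le i,j\le n$). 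Each monomial has exactly one factor from each matrix, so it has total degree $d$, and $g$ is homogeneous of degree $d$ with no constant term. The random vector $\mathbf{X}$ collecting all the entries lies in $\mathcal{T}^d(f,\dots,f)$ with $dn^2$ components. Here I read the hypothesis as $P(|X^{(k)}_{ij}|\ge t)\le f(t)$ for all $i,j,k$; the superscript $(d)$ in the statement looks like a typo for $(k)$.

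Next comes the bookkeeping of coefficients. Distinct index tuples $(j_1,\dots,j_d)$ give distinct monomials, because the tuple is recovered from the first indices of the factors. So the coefficients $a_i$ of $g$ are exactly the numbers $m_{j_1,\dots,j_d,j_1}$, and
$$
\sum_i|a_i|=\sum_{j_1,\dots,j_d}|m_{j_1,\dots,j_d,j_1}|\le \sum_{j_1,\dots,j_{d+1}}|m_{j_1,\dots,j_{d+1}}|=\|\mathfrak{m}(d)\|_1,
$$
taking $\|\cdot\|_1$ to be the entrywise $\ell^1$ norm. Zero coefficients can be dropped or kept; they change nothing.

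Corollary~\ref{homogeneouscorollary} only bounds $P(g\ge t)$, but its proof actually bounds $P(g'(\mathbf{X})\ge t)$ with $g'=\sum_i|g_i|$, and $|g|\le g'$. So I will rerun that argument, applying Theorem~\ref{theorem2} to $g'$ on $n'=dn^2$ variables with all $f_i=f$; the equal values $f_i(0)$ let $t$ range over all of $[0,\infty)$. This gives
$$
P(|g(\mathbf{X})|\ge t)\le dn^2\, f\!\left(\frac{t^{1/d}}{(\sum_i|a_i|)^{1/d}}\right).
$$
Since $f$ is decreasing and $\sum_i|a_i|\le\|\mathfrak{m}(d)\|_1$, the argument of $f$ only decreases when we replace $\sum_i|a_i|$ by $\|\mathfrak{m}(d)\|_1$. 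Hence $f$ only increases, which gives the stated bound.

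The main obstacle is checking that $g'$ satisfies the hypotheses of Theorem~\ref{theorem2}. It is continuous and even in each coordinate. It is only weakly, not strictly, increasing in a coordinate, since a variable may be multiplied by other variables equal to $0$. I would handle this the same way the corollary implicitly does: the argument that identifies the southwest boundary only needs points on the line $L(s)$, where all coordinates are positive. Alternatively, I would replace $g'$ by $g'+\epsilon\sum_\ell|x_\ell|^d$ and let $\epsilon\downarrow 0$, using continuity of $f$. For the degenerate cases $t=0$ or $\|\mathfrak{m}(d)\|_1=0$, the bound is trivially at least $1$, using $f(0)\ge 1$ and reading the right-hand side as $dn^2 f(\infty)$ appropriately.
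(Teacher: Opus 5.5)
Your proposal is correct and follows the paper's route. The paper's entire proof is ``apply Corollary~\ref{homogeneouscorollary}'', and you fill in what that leaves implicit: writing the trace as the degree-$d$ homogeneous polynomial $\sum_{j_1,\dots,j_d} m_{j_1,\dots,j_d,j_1}X^{(1)}_{j_1j_2}\cdots X^{(d)}_{j_dj_1}$ in $dn^2$ variables, bounding $\sum_i|a_i|\le\|\mathfrak{m}(d)\|_1$, and reading the two-sided bound off the $g'=\sum_i|g_i|$ step of the corollary's proof.

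Of your two remedies for $g'$ being only weakly increasing (a point the paper's proof of the corollary glosses over), rely on the perturbation $g'+\epsilon\sum_\ell|x_\ell|^d$. The ``positive coordinates of $L(s)$'' argument fails when some entry occurs in no monomial with a nonzero coefficient, for example every off-diagonal entry when $d=1$. In that case you can lower that coordinate of $L(s_t)$ and stay in $V_t$, so $L(s_t)$ is not a minimal element.
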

\begin{proof}
Apply corollary~\ref{homogeneouscorollary}.
\end{proof}
Again, this holds whether the r.v.'s are independent or dependent.\footnote{I originally developed theory for tail bounds in \cite{harrison2026} in the hopes of being able to apply it to Schur multipliers, so it is gratifying to finally have some 
confirming formula for Schur multipliers.}

\section{Comparison to other concentration inequalities}
The following concentration inequalities (tail bounds) are from \cite{Vershynin2023}, excluding possibly the union bound 
which I'm not aware is in \cite{Vershynin2023} or not.
\begin{theorem}[Hoeffding's inequality, two-sided]\label{Hoeffding1}
Let \(X_1, \dots, X_N\) be independent symmetric Bernoulli r.v.'s, and let \(a = (a_1, \dots, a_n) \in \mathbb{R}^N\). Then, for any \(t \ge 0\), we have
\[
P\left( \left| \sum_{i=1}^N a_i X_i \right| \ge t \right) \le 2 \exp\left( -\frac{t^2}{2 \|a\|_2^2} \right).
\]
\end{theorem}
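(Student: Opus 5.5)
We will prove the one-sided bound $P\big(\sum_{i=1}^N a_i X_i \ge t\big) \le \exp\big(-t^2/(2\|a\|_2^2)\big)$ and then get the two-sided statement from the symmetry of the $X_i$ and a union bound. Here we read $a=(a_1,\dots,a_N)\in\mathbb{R}^N$ (the index $n$ in the statement should be $N$), and we assume $a\neq 0$, since otherwise the claim is trivial for $t>0$. Note that the southwest-boundary machinery of theorem~\ref{theorem1} and theorem~\ref{theorem2} is not the right tool here. Those results make no use of independence, so they cannot produce the Gaussian $\|a\|_2$ dependence. Instead we use the classical exponential moment (Chernoff) method, which exploits independence.

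\textbf{Step 1 (Chernoff reduction).} Let $S=\sum_{i=1}^N a_iX_i$ and fix $\lambda>0$. By Markov's inequality applied to $e^{\lambda S}$,
\[
P(S\ge t)\le e^{-\lambda t}\,\mathbb{E}e^{\lambda S}=e^{-\lambda t}\prod_{i=1}^N \mathbb{E}e^{\lambda a_i X_i}.
\]
The factorization of the expectation is exactly where independence is used. Since each $X_i$ takes the values $\pm1$ with probability $1/2$, we have $\mathbb{E}e^{\lambda a_iX_i}=\cosh(\lambda a_i)$.

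\textbf{Step 2 (the key scalar estimate).} We show that $\cosh x\le e^{x^2/2}$ for all real $x$. This follows by comparing Taylor series term by term:
\[
\cosh x=\sum_{k\ge0}\frac{x^{2k}}{(2k)!}, \qquad e^{x^2/2}=\sum_{k\ge0}\frac{x^{2k}}{2^k k!},
\]
together with the inequality $(2k)!\ge 2^k k!$. The latter holds because $(2k)!=\prod_{j=1}^{k}(2j)(2j-1)\ge\prod_{j=1}^k 2j=2^kk!$. This is the only genuinely non-mechanical step, so it is where the main care is needed. Combining it with Step 1 gives
\[
P(S\ge t)\le \exp\!\Big(-\lambda t+\tfrac{\lambda^2}{2}\|a\|_2^2\Big).
\]

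\textbf{Step 3 (optimize and symmetrize).} Choosing $\lambda=t/\|a\|_2^2$ minimizes the exponent and yields $P(S\ge t)\le \exp\big(-t^2/(2\|a\|_2^2)\big)$. The vector $(-X_1,\dots,-X_N)$ has the same joint law as $(X_1,\dots,X_N)$, so the same bound holds for $P(-S\ge t)$. Finally, $P(|S|\ge t)\le P(S\ge t)+P(-S\ge t)$ gives the factor $2$, which completes the proof.
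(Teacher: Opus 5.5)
Your proof is correct. The paper gives no proof of its own: it quotes the result from Vershynin, and your argument is exactly the standard proof in that source. You apply Markov's inequality to $e^{\lambda S}$, factor using independence, bound $\cosh x\le e^{x^2/2}$, choose $\lambda=t/\|a\|_2^2$, and finish with symmetry and a union bound.

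You are also right that the paper's southwest-boundary results cannot give this bound, and the paper itself notes that corollary~\ref{homogeneouscorollary} is trivial for Bernoulli variables. The only omission is cosmetic: $\lambda=t/\|a\|_2^2>0$ needs $t>0$, but $t=0$ is immediate because the right-hand side is then $2$.
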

In order to apply corollary~\ref{homogeneouscorollary} to Bernoulli r.v.'s $(X_i)_i$ one must prescribe a tail bound $f$ with $f(1)\geq P(|X_i|\geq 1) = 1$. Then corollary~\ref{homogeneouscorollary} gives
\begin{equation}\label{Hoeffding1eq}
P\left( \left| \sum_{i=1}^N a_i X_i \right| \ge t \right) \leq nf\big(t/||a||_1\big)
\end{equation}
which equals $n$ for $t=||a||_1$, so corollary~\ref{homogeneouscorollary} only produces a trivial bound here (since the LHS of \eqref{Hoeffding1eq} equals $0$ for $t>||a||_1$). 

Now, if \(X_i\) is a real r.v. such that \(P(|X_i| > t) \le 2e^{-c t^2}\), \(\forall t \ge 0\) one says \(X_i\) is \emph{sub-gaussian}. In this case, the sub-gaussian 
norm of \(X_i\) is defined as \(\|X_i\|_{\psi_2} = \inf\{ t > 0 : \mathbb{E} \exp(X_i^2/t^2) \le 2 \}\) which can be shown to be a norm. Then 

\begin{theorem}[General Hoeffding inequality]
Let \(X_1, \dots, X_N\) be independent mean-zero sub-gaussian r.v.'s (with the same $c$), and let \(a = (a_1, \dots, a_n) \in \mathbb{R}^N\). Then, for any 
\(t \ge 0\), we have
\begin{equation}\label{Hoeffding2eq}
P\left( \left| \sum_{i=1}^N a_i X_i \right| \ge t \right) \le 2 \exp\left( -\frac{c t^2}{K^2 \|a\|_2^2} \right)
\end{equation}
where \(K = \max_i \|X_i\|_{\psi_2}\).
\end{theorem}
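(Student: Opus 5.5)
This is the classical General Hoeffding inequality, quoted from \cite{Vershynin2023}. The plan is the standard moment generating function (Chernoff) argument, which does use independence, unlike the results of the previous sections.

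\textbf{Step 1: an MGF bound for each summand.} For a mean-zero sub-gaussian $X_i$, the first step is to show
\[
\mathbb{E}\exp(\lambda X_i) \le \exp\big(C\lambda^2\|X_i\|_{\psi_2}^2\big) \quad \text{for all } \lambda\in\mathbb{R},
\]
with $C$ an absolute constant. I would argue as follows.
\begin{itemize}
\item The definition of $\|X_i\|_{\psi_2}$ gives $\mathbb{E}\exp(X_i^2/K_i^2)\le 2$, where $K_i = \|X_i\|_{\psi_2}$.
\item Expanding this exponential controls the even moments: $\mathbb{E}X_i^{2p}\le 2\,p!\,K_i^{2p}$.
\item Next, expand $\mathbb{E}e^{\lambda X_i}$ in a Taylor series. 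The linear term vanishes because $X_i$ is mean-zero.
\item For $|\lambda| K_i\le 1$, bound the remaining terms by the moment estimate, summing to $1+O(\lambda^2K_i^2)\le e^{C\lambda^2K_i^2}$.
\item For large $|\lambda|$, use $\lambda x\le \lambda^2K_i^2/2 + x^2/(2K_i^2)$ together with the $\psi_2$ bound.
\end{itemize}
I expect this step to be the main technical obstacle, mainly keeping the constants uniform across the two regimes of $\lambda$.

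\textbf{Step 2: tensorize over the summands.} By independence the MGF factorizes:
\[
\mathbb{E}\exp\Big(\lambda\sum_i a_iX_i\Big)=\prod_i \mathbb{E}\exp(\lambda a_i X_i)\le \exp\big(C\lambda^2K^2\|a\|_2^2\big),
\]
using $K_i\le K$ in each factor.

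\textbf{Step 3: Chernoff bound and optimization.} Markov's inequality applied to $e^{\lambda S}$, with $S=\sum_i a_iX_i$, gives
\[
P(S\ge t)\le \exp\big(-\lambda t + C\lambda^2K^2\|a\|_2^2\big).
\]
Choosing $\lambda = t/(2CK^2\|a\|_2^2)$ yields $\exp\big(-t^2/(4CK^2\|a\|_2^2)\big)$. Applying the same argument to $-S$ and taking a union bound gives the factor $2$. The absolute constant $1/(4C)$ plays the role of $c$ in \eqref{Hoeffding2eq}.

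\textbf{On the meaning of $c$.} The statement says the $X_i$ share "the same $c$", but this $c$ is best read as an absolute constant. Step 1 only uses the $\psi_2$ norm, so the tail parameter of the $X_i$ enters only through $K$.
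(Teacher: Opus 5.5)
Your proof is correct and follows essentially the standard argument. The paper gives no proof of its own here, only the citation to \cite{Vershynin2023}, and there the result is proved by the same three steps you use: an MGF bound for mean-zero sub-gaussians, tensorization over the independent summands, and Chernoff optimization. (In your small-$\lambda$ Taylor step, the odd moments follow from your even-moment bounds by Cauchy--Schwarz.)

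Reading $c$ as an absolute constant is not just the best interpretation but the only tenable one. With $c$ taken literally as the common tail parameter the inequality is not scale-invariant and fails: take $N=1$, $a_1=1$, $X_1=\varepsilon/2$ with $\varepsilon$ Rademacher, $c=4\ln 2$, $K^2=1/(4\ln 2)$ and $t=1/2$. The right-hand side is then $2e^{-4(\ln 2)^2}<1=P(|X_1|\ge 1/2)$.
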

In order to apply corollary~\ref{homogeneouscorollary} to the sub-gaussian case, it is necessary to instead consider $X_i$ such that \(P(|X_i| \geq t) \le 2e^{-c t^2}\) (note the change of $>$ to $\geq$).
This is because I only developed the southwest boundary for ``$\geq$" tail bounds. $>$ tail bounds were qualitatively different. In the case $P(|X_i|\geq t) \leq 2e^{-c t^2}$ 
for all $t\geq 0$, corollary~\ref{homogeneouscorollary} gives 
\begin{equation}\label{HomogeneousHoeffding}
P\left( \left| \sum_{i=1}^N a_i X_i \right| \ge t \right) \le 2n \exp\left( -\frac{c t^2}{\|a\|_1^2} \right).
\end{equation}
The $2n$ factor is worse. If $K \leq ||a||_1/||a||_2$ then \eqref{Hoeffding2eq} is better. If $K > ||a||_1/||a||_2$ then \eqref{HomogeneousHoeffding} is sharper for large $t$.

The union bound is the following observation: if $t_1\cdots t_n = t, \ t_i>0$ for all $i$, then if $|X_1\cdots X_n| \geq t$ necessarily $|X_i| \geq t_i$ for some $i$. Thus, 
$$
P\big(|X_1\cdots X_n| \geq t \big) \leq \sum_{k=1}^n P(|X_k|\geq t_k) \leq \sum_{k=1}^n f_k(t_k) 
$$
assuming $P(|X_k|\geq t) \leq f_k(t)$ for all $k$. If one assumes $f_1 = \cdots = f_n = f$ and guesses $t_1=\cdots = t_n = t^{1/n}$ might be optimal, 
one gets 
$$
P(|X_1\cdots X_n| \geq t) \leq nf(t^{1/n})
$$
which is the same bound given by applying corollary~\ref{homogeneouscorollary} to $g(x_1,...,x_n) = x_1\cdots x_n$. Thus theorem~\ref{theorem1} can be thought of as 
somewhat of a geometric generalization of the union bound. Also, the union bound idea gives
$$
P\big(\big|a_1X_1 + \cdots+ a_nX_n \big| \geq t\big) \leq nf\Big(\frac{t}{\sum_{i} |a_i|}\Big)
$$
(which is the same bound given by applying corollary~\ref{homogeneouscorollary} to $g(x_1,...,x_n) = a_1X_1+\cdots+a_nX_n$) and 
\begin{equation}\label{unionboundhomogeneous}
P(g(X) \geq t) \leq \bigg(\sum_{i=1}^m k_i \bigg) f\left(\frac{(t-C)^{1/k}}{(\sum_i |a_i|)^{1/k}} \right)
\end{equation}
where $g$ is from corollary~\ref{homogeneouscorollary}, $m$ is the number of monomials in $g$, and $k_i$ is the number of variables in the $i$th monomial not including powers. 
So this is where corollary~\ref{homogeneouscorollary} differs from the union bound, since $n \leq \sum_{i=1}^m k_i$ with inequality being strict 
if some r.v.\ occurs in more than one monomial.  

It is possible to use theorem~\ref{theorem2} to derive tail bounds on $||X||_p$ for $p \in (0,1)$, the Frobenius norm of a random matrix, and 
operator norm of a random matrix (since the operator norm is less than the Frobenius norm), however those inequalities can also be derived 
by a simple union bound. However if $A$ is a square random matrix then $g(A) = \det A$ is a homogeneous polynomial, and applying corollary~\ref{homogeneouscorollary} yields 
$$
P\big(|\det A| \geq t\big) \leq n^2 f\bigg(\frac{t^{1/n}}{(n!)^{1/n}}\bigg), \ \forall t \geq 0
$$
which is a significant improvement over the union bound, being smaller by a factor of $(n-1)!$. At this point its become somewhat clearer what we've inadvertently 
done is generalize the union bound method, in a way which avoids double counting separate appearances of the same r.v..

\end{document}